\DeclareRobustCommand*\uell{\mathpalette\@uell\relax}
\newcommand*\@uell[2]{
    \setbox0=\hbox{$#1\ell$}
    \setbox1=\hbox{\rotatebox{10}{$#1\ell$}}
    \dimen0=\wd0 \advance\dimen0 by -\wd1 \divide\dimen0 by 2
    \mathord{\lower 0.1ex
    \hbox{\kern\dimen0\unhbox1\kern\dimen0}}
}
\newcommand{\mysetminusD}{\hbox{\tikz{\draw[line width=0.6pt,line cap=round] (3pt,0) -- (0,6pt);}}}
\newcommand{\mysetminusT}{\mysetminusD}
\newcommand{\mysetminusS}{\hbox{\tikz{\draw[line width=0.45pt,line cap=round] (2pt,0) -- (0,4pt);}}}
\newcommand{\mysetminusSS}{\hbox{\tikz{\draw[line width=0.4pt,line cap=round] (1.5pt,0) -- (0,3pt);}}}
\newcommand{\mysetminus}{\mathbin{\mathchoice{\mysetminusD}{\mysetminusT}{\mysetminusS}{\mysetminusSS}}}
\renewcommand{\setminus}{\mysetminus}
\newcommand*{\@restrictionaux}[2]{{#1\,\smash{\vrule height 0.7\ht1 depth 1.2\dp1}}_{\,#2}} \newcommand*{\restr}[2]{\mathchoice
  {\setbox1\hbox{${\displaystyle #1}_{\scriptstyle #2}$} \@restrictionaux{#1}{#2}}
  {\setbox1\hbox{${\textstyle #1}_{\scriptstyle #2}$} \@restrictionaux{#1}{#2}}
  {\setbox1\hbox{${\scriptstyle #1}_{\scriptscriptstyle #2}$} \@restrictionaux{#1}{#2}}
  {\setbox1\hbox{${\scriptscriptstyle #1}_{\scriptscriptstyle #2}$} \@restrictionaux{#1}{#2}}
}
\renewcommand{\P}{\mathds{P}}
\newcommand{\E}{\mathds{E}}
\newcommand{\R}{\mathds{R}}
\newcommand{\M}{\mathds{M}}
\newcommand{\goesto}[2][]{\xrightarrow[\:#2\:]{\:#1\:}}
\newcommand{\Goesto}[2][]{\ext@arrow 0359\Rightarrowfill@{\:#2\:}{\:#1\:}}
\newcommand{\indic}{\mathbf{1}}
\newcommand{\diff}{\mathrm{d}}
\DeclarePairedDelimiter{\abs}{|}{|}
\DeclareMathOperator{\supp}{supp}
\renewcommand{\Pr}{\mathrm{P}}
\newcommand{\GP}{\mathrm{GP}}
\newcommand{\Cb}{\mathscr{C}_{\mathrm{b}}}
\renewcommand{\phi}{\varphi}
\renewcommand{\epsilon}{\varepsilon}
\renewcommand{\emptyset}{\varnothing}
\newtheorem{theorem}{Theorem}[section]
\newtheorem{proposition}[theorem]{Proposition}
\newtheorem{lemma}[theorem]{Lemma}
\newtheorem{corollary}[theorem]{Corollary}
\theoremstyle{definition}
\newtheorem{definition}[theorem]{Definition}
\newtheorem{remark}[theorem]{Remark}
\definecolor{myteal}{HTML}{004b6f}
\title{Vague convergence and method of moments for random metric measure spaces}
\author[1]{Félix Foutel-Rodier}
\date{}
\affil[1]{Department of Statistics, University of Oxford}
\begin{document}

\maketitle

\abstract{
    We introduce a notion of vague convergence for random marked metric
    measure spaces. Our main result shows that convergence of the moments
    of order $k \ge 1$ of a random marked metric measure space is
    sufficient to obtain its vague convergence in the Gromov-weak
    topology. This result improves on previous methods of moments that
    also require convergence of the moment of order $k=0$, which in 
    applications to critical branching processes amounts to estimating
    a survival probability. We also derive two useful companion
    results, namely a continuous mapping theorem and an approximation
    theorem for vague convergence of random marked metric measure spaces.
}

\section{Introduction}

\paragraph{Context.} 
A very fruitful approach to studying the properties of large random
combinatorial structures is to derive scaling limits. This
relies on viewing these objects as random metric spaces and proving  
convergence to a limit -- typically a random continuous metric space -- in an
appropriate Gromov-type topology \cite{abraham2013note,
greven2009convergence}. Starting with the pioneering work of Aldous on
large random trees \cite{aldous91continuum, aldous1993}, this point of
view has led to deep insights into the structure of many other
combinatorial objects, including random graphs and planar maps 
\cite{addario2012contiuum, miermont2009tessellations, le2007topological}.
Another example of application of this approach is the study of
genealogies of population models. By recording for each pair of
individuals the time until they find a common ancestor, a genealogy can
be encoded as an ultrametric distance \cite{lambert17random}. Envisioning
a population as a random ultrametric space opens the possibility of
studying its genealogy through this scaling limit approach. Although this
point of view has already found some applications \cite{evans2000kingman,
Greven2013}, it remains less popular than the standard coalescent
approach to genealogies \cite{kingman82coalescent, berestycki09recent}.

The more specific motivation of the present work stems from recent development
in the understanding of the genealogical structure of branching
processes. Starting with the work of Harris and Roberts
\cite{harris2017many}, several expressions have been proposed to compute
the expectation of $k$-fold sums over particles in a branching process,
which are gathered under the name of \emph{many-to-few formulas}
\cite{harris2017many, foutel2023}. From the random metric space
perspective, these formulas give access to the \emph{moments} of the
genealogy. Under a mild condition, the convergence of these moments
entails the convergence of the genealogy, leading to a simple approach
for studying such questions which has already proved to be effective
\cite{harris2020coalescent, harris2022coalescent, harris2023universality,
schertzer2023spectral, foutel2023, boenkost2022genealogy, foutel2024semi}.

One current limitation of this approach is that the many-to-few formula
only gives access to moments of order $k \ge 1$. For critical branching
processes, one typically needs an additional estimate on the survival
probability, which acts as the moment of order $k = 0$. Although deriving
such an estimate is interesting in itself, it often requires new
arguments compared to computing the moments of higher orders. The current
work proposes a new approach to circumvent this issue, which relies on
introducing an appropriate notion of \emph{vague convergence}. Our main 
result will be an extension of the method of moments, which shows that
convergence of the moments for $k \ge 1$ is sufficient to obtain vague
convergence of the metric spaces. This approach has been recently used
in the context of branching diffusions and semi-pushed fronts
\cite{foutel2024semi}.

In addition to this first application, vague topology is the natural
framework to consider convergence of (and to) \emph{infinite} measures on
metric spaces. A celebrated example of such infinite measures is the
Brownian continuum random tree (CRT) under the infinite excursion measure. The
results that we derive here open the way to studying convergence 
to the CRT through moment computations. This was the original motivation
for our work, and these ideas were used in \cite{foutel2024crt}
to prove convergence to the CRT for a class of spatial branching
processes. Finally, vague convergence is central to the theory of point
processes. Locally finite point measures form a natural state space to
consider collections of random metric spaces with finitely many ``large''
elements, such as the connected components of a random graph or the trees
in a random forest.

\paragraph{Main results and outline.} Our notion of vague convergence 
is introduced in Section~\ref{sec:definitions} after having recalled some
basic definitions on (marked) metric measure spaces. Intuitively, a
random metric space converges vaguely if its law converges weakly outside
of neighbourhoods of $\mathbf{0}$, the metric measure space with a null
total mass. 

In Section~\ref{sec:firstProperties}, we show how vague convergence
relates to weak convergence and prove a continuous mapping theorem for
sequences of functionals (Theorem~\ref{thm:continuousMapping}). A simple
but interesting consequence of these results is that vague convergence
entails a weak estimate on the probability that the metric measure space
has positive mass, see Proposition~\ref{prop:weakEstimate}. In the context
of population models, this shows that computations of moments of order $k
\ge 1$ provides a weak estimate on the survival probability.

Our main result, the method of moments, is derived in Section~\ref{sec:momentMethod}. 
We define the notion of moments of a random metric measure space and show
in Theorem~\ref{thm:methodMoments} that convergence of moments of order
$k \ge 1$ implies vague convergence.

Finally, we provide in Section~\ref{sec:perturbation} a perturbation
result (Theorem~\ref{thm:approximation}) which is valuable for our moment
approach. At first glance, the method of moments requires to work under
suboptimal moment conditions. However, one can recover optimal results by
removing some parts of the metric space to obtain finite moments and
comparing the truncated space to the original one. Theorem~\ref{thm:approximation} 
allows one to perform this comparison in the vague topology.

\section{Definition}
\label{sec:definitions}

\subsection{The marked Gromov-weak topology}
\label{sec:GromovWeak}

We first recall the definition of a random metric measure spaces and of the
Gromov-weak topology from \cite{greven2009convergence,
depperschmidt2011marked}. In the applications that we have in mind, the
metric structure is typically enriched with some marks, representing for
instance spatial positions or genotypes. We follow the approach of
\cite{depperschmidt2011marked} and consider a fixed Polish space $E$,
called the \emph{mark space}.
If $\mu$ is a measure on $X \times E$, we will denote by $\mu_X$ its
projection on $X$. For a Borel subset $A$ of $X$, it will be convenient
to use the shorthand notation
\[
    \abs{A} \coloneqq \mu_X(A) = \mu(A \times E). 
\]
Finally, we write $f_* \mu$ for the pushforward of $\mu$ by the function
$f$ and $\mu^k = \mu \otimes \dots \otimes \mu$ for the $k$-fold product
measure of $\mu$.

\begin{definition}[Marked metric measure spaces]
    A triple $\mathcal{X} = (X, d, \mu)$ is a called a marked metric
    measure space (short mmm-space) if $(X, d)$ is a separable complete
    metric space and $\mu$ is a finite measure on $X \times E$. Two
    mmm-spaces $(X,d,\mu)$ and $(X',d',\mu')$ are \emph{equivalent} if
    there is a bijective isometry $\phi \colon \supp \mu_X \to \supp \mu'_{X'}$
    that preserves the measures and marks in the sense that
    $\tilde{\phi}_* \mu = \mu'$, where $\tilde{\phi} \colon X \times E
    \to X' \times E$ is given by $\tilde{\phi}(x,e) = (\phi(x), e)$. We
    denote by $\mathscr{X}$ the set of all equivalence classes of
    mmm-spaces.
\end{definition}

We now introduce a class of functionals on $\mathscr{X}$ that will play a
central role, both in the definition of the Gromov-weak topology and in
the method of moments. We say that $\Phi \colon \mathscr{X} \to \R$ is a 
\emph{monomial} of order $k$ \cite{greven2009convergence, depperschmidt2011marked} 
if it is of the form
\begin{equation} \label{eq:monomial}
    \forall \mathcal{X} \in \mathscr{X},\quad
    \Phi( \mathcal{X} ) 
    = \int_{(X \times E)^k} \phi\big( d(\mathbf{x}), \mathbf{e} \big) 
    \mu^k(\diff \mathbf{x}, \diff \mathbf{e}),
\end{equation}
for some $k \ge 1$ and continuous bounded $\phi \colon \R_+^{k\times k}
\times E^k \to \R$. Here, we have used the bold face notation for
vectors $\mathbf{x} = (x_1,\dots, x_k)$, $\mathbf{e} = (e_1, \dots,
e_k)$, and denoted by $d(\mathbf{x}) = (d(x_i,x_j))_{i,j \le k}$ the
matrix of pairwise distances between elements of $\mathbf{x}$. By
convention, the monomials of order $k=0$ are the constant functions, and
we let $\Pi$ and $\Pi^*$ be the set of monomials of order $k\ge 1$ and
$k\ge 0$ respectively.

\begin{definition}[Marked Gromov-weak topology]
    Let $\mathcal{X}, \mathcal{X}_1, \mathcal{X}_2, \dots$ be mmm-spaces. We
    say that $(\mathcal{X}_n)_{n \ge 1}$ converges to $\mathcal{X}$ in
    the marked Gromov-weak topology if
    \[
        \forall \Phi \in \Pi,\quad \Phi(\mathcal{X}_n) \goesto{n \to
        \infty} \Phi(\mathcal{X}).
    \]
    We will often simply refer to the Gromov-weak topology.
\end{definition}

We end this section by recalling the expression of a complete distance
that metrizes the Gromov-weak topology. Fix some complete distance $d_E$
that metrizes the topology of $E$. The (marked) \emph{Gromov--Prohorov
    distance} \cite{greven2009convergence, 
depperschmidt2011marked} is defined as 
\begin{equation} \label{eq:gromovProhorov}
    \forall \mathcal{X}, \mathcal{X}' \in \mathscr{X},\quad
    d_{\GP}(\mathcal{X}, \mathcal{X'})
    = \inf_{Z, \iota, \iota'} d_{\Pr}^Z \big( \tilde{\iota}_*\mu,
        \tilde{\iota}'_*\mu' \big).
\end{equation}
In this expression, the infimum is taken over all complete separable metric spaces
$(Z,d_Z)$ and isometries $\iota \colon X \to Z$, $\iota' \colon X' \to
Z$, $d_{\Pr}^Z$ stands for the Prohorov distance on $Z \times E$ with the
metric $d_Z + d_E$, and 
\begin{equation*} 
    \tilde{\iota} \colon (x, e) \in X \times E \mapsto (\iota(x), e),
    \qquad
    \tilde{\iota}' \colon (x', e) \in X' \times E \mapsto (\iota'(x'), e).
\end{equation*}
The fact that this distances induces the Gromov-weak topology and that it
is complete can be found in \cite[Theorem~2]{depperschmidt2011marked}.

\subsection{Vague convergence of measures}
\label{sec:vagueConvergence}

The space of mmm-spaces $\mathscr{X}$ equipped with the Gromov-weak
topology has a natural Borel $\sigma$-field associated to it. We can
therefore define measures on $\mathscr{X}$ and random elements of
$\mathscr{X}$. Generically, we will write $\M$ for a measure on
$\mathscr{X}$ to distinguish it from the sampling measure $\mu$ of a
point $(X, d, \mu)$ in $\mathscr{X}$. If $\mathcal{X}$ is a random
element of $\mathscr{X}$, we write $\mathscr{L}(\mathcal{X})$ for
its law.

There is a natural notion of weak convergence of finite measures on
$\mathscr{X}$ and a corresponding notion of convergence in distribution
for the Gromov-weak topology. Let us recall it rapidly. We denote by
$\Cb$ the set of all continuous bounded functionals $F \colon \mathscr{X}
\to \R$ and, for a measure $\M$, we use the notation
\[
    \M[F] \coloneqq \int_{\mathscr{X}} F(\mathcal{X}) \, \M( \diff \mathcal{X})
\]
for the integral of $F$ against $\M$. 
A sequence of finite measures $(\M_n)_{n \ge 1}$ converges weakly in the
Gromov-weak topology to $\M$ if 
\[
    \forall F \in \Cb,\quad \M_n[F] \goesto{n \to \infty}
    \M[F].
\]

As alluded to in the introduction, our objective is to define an
appropriate notion of vague convergence for measures on $\mathscr{X}$,
which we obtain by considering the behaviour of the measure only for
mmm-spaces with a large mass. 
There is a unique element of $\mathscr{X}$ that corresponds to the
equivalence class of all mmm-spaces $(X,d,\mu)$ with a null sampling 
measure $\mu \equiv 0$. We denote it by $\mathbf{0}$ and let
$\mathscr{X}^* \coloneqq \mathscr{X} \setminus \{\mathbf{0} \}$. We say
that a set $A \subseteq \mathscr{X}$ is \emph{bounded away from
$\mathbf{0}$} if $A \cap \{ \abs{X} < \epsilon \} = \emptyset$ for some
$\epsilon > 0$ and denote by $\mathscr{C}^*_\mathrm{b}$ the set of all
continuous bounded functionals $F \colon \mathscr{X} \to \R$ whose support
is bounded away from $\mathbf{0}$. In other words, $F \in
\mathscr{C}_\mathrm{b}^*$ if and only if there is $\epsilon > 0$ such
that
\[
    \forall \mathcal{X} \in \mathscr{X},\quad 
    \abs{X} < \epsilon \implies F(\mathcal{X}) = 0.
\]
Finally, we say that a measure $\M$ on $\mathscr{X}^*$ is locally finite
if $\M(A) < \infty$ for any bounded away from $\mathbf{0}$ Borel set $A$.

\begin{definition}[Vague convergence of measures] \label{def:vagueConv}
    Let $\M, \M_1, \M_2, \dots$ be locally finite measures on
    $\mathscr{X}^*$. We say that $(\M_n)_{n \ge 1}$ converges
    \emph{vaguely} for the marked Gromov-weak topology to $\M$ if
    \begin{equation} 
        \forall F \in \Cb^*,\quad \M_n[F] \goesto{n \to \infty}
        \M[F].
    \end{equation}
\end{definition}

\subsection{Connexion to other notions of vague convergence}
\label{sec:otherConv}

There are several general notions of vague convergence for measures on
Polish spaces that have already been proposed in the literature 
\cite{hult2006regular, Lindskog2014, kallenberg2017random,
daley2003introduction, basrak19note}. We briefly show how to fit our
notion of vague convergence into those, so that we can safely use results
from these references.

Fix a complete separable metric space $(\mathscr{Y}, d)$. According to
\cite[Section~A2.6]{daley2003introduction}, a sequence of locally finite
measures $(\M_n)_{n \ge 1}$ on $\mathscr{Y}$ converges to $\M$ in the
weak\textsuperscript{\#} topology if $\M_n[F] \to \M[F]$ as $n \to
\infty$ for any continuous bounded $F \colon \mathscr{Y} \to \R$
whose support is a bounded in $(\mathscr{Y}, d)$. (The same notion is
introduced under the name of vague topology in
\cite[Section~4.1]{kallenberg2017random}.) Our definition fits in this
framework by setting $\mathscr{Y} = \mathscr{X}^*$ and using as a
distance
\[
    \forall \mathcal{X}, \mathcal{X}' \in \mathscr{X}^*,\quad
    d(\mathcal{X}, \mathcal{X}') = \abs*{\frac{1}{\abs{X}} - \frac{1}{\abs{X'}}} 
    + d_{\GP}(\mathcal{X}, \mathcal{X}') \wedge 1,
\]
where $d_\mathrm{GP}$ is defined in \eqref{eq:gromovProhorov}.
Clearly, $d$ is complete and a set is bounded in $(\mathscr{Y}, d)$ if
and only if it is bounded away from $\mathbf{0}$. 

Alternatively, \cite{hult2006regular} constructs a notion of vague
convergence by removing an element $\tilde{\mathbf{0}} \in \mathscr{Y}$
out of $(\mathscr{Y}, d)$. Reformulating their work, this amounts to
endowing $\mathscr{Y}^* = \mathscr{Y} \setminus \{\tilde{\mathbf{0}}\}$
with the weak\textsuperscript{\#} convergence corresponding to the
distance
\[
    \forall \mathcal{Y}, \mathcal{Y}' \in \mathscr{Y}^*,\quad
    \tilde{d}(\mathcal{Y}, \mathcal{Y}') =
    \abs*{\frac{1}{d(\tilde{\mathbf{0}}, \mathcal{Y})} -
    \frac{1}{d(\tilde{\mathbf{0}}, \mathcal{Y}')}} 
    + d(\mathcal{Y}, \mathcal{Y}') \wedge 1.
\]
We recover our definition with $\mathscr{Y} = \mathscr{X}$ and
$\tilde{\mathbf{0}} = \mathbf{0}$.

\section{First properties of vague convergence}
\label{sec:firstProperties}

\subsection{Characterisation}

We recall some simple properties of vague convergence that we will need
in the rest of this note. For any $\epsilon > 0$, we will denote the
restriction of a measure $\M$ to the set $\{ \abs{X} \ge \epsilon \}$ as
$\M^{(\epsilon)}$, that is, 
\[
    \forall A \in \mathscr{B}(\mathscr{X}),\quad 
    \M^{(\epsilon)}(A) = \M(A \cap \{ \abs{X} \ge \epsilon \} ).
\]
Vague and weak convergence are connected through the following result,
which is standard for measures on $\R^d$, see for instance
\cite[Lemma~5.20]{kallenberg2002foundations}.

\begin{proposition} \label{prop:vagueCharaterization}
    Let $\M, \M_1, \M_2,\dots$ be locally finite measures on
    $\mathscr{X}^*$. The following statements are equivalent.
    \begin{enumerate}[\rm (i)]
        \item The sequence $(\M_n)_{n \ge 1}$ converges vaguely to $\M$.
        \item For any $\epsilon > 0$ such that $\M( \abs{X} = \epsilon ) = 0$, 
            $(\M^{(\epsilon)}_n)_{n \ge 1}$ converges weakly to $\M^{(\epsilon)}$.
    \end{enumerate}
    If $(\M_n)_{n \ge 1}$ converges vaguely to $\M$, the following are
    also equivalent.
    \begin{enumerate}[\rm (i)]
        \item The sequence $(\M_n)_{n \ge 1}$ converges weakly to $\M$ in the
            Gromov-weak topology. 
        \item The sequence of total mass converges, $\M_n[1] \goesto{n
            \to \infty} \M[1] < \infty$.
    \end{enumerate}
\end{proposition}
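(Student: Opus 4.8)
The plan is to treat vague convergence as the weak$^{\#}$ convergence on the metric space $(\mathscr{X}^*, d)$ introduced in Section~\ref{sec:otherConv}, in which bounded sets are exactly those bounded away from $\mathbf{0}$, and to follow the classical scheme for measures on $\R^d$ \cite[Lemma~5.20]{kallenberg2002foundations}. Three preliminary observations will be used throughout. First, the total mass functional $\mathcal{X} \mapsto \abs{X}$ is a monomial of order $1$ (take $\phi \equiv 1$ in \eqref{eq:monomial}), hence continuous for the Gromov-weak topology; consequently $\{\abs{X} \ge \epsilon\}$ is closed, $\{\abs{X} < \epsilon\}$ is open, and every continuous function of $\abs{X}$ yields a continuous functional on $\mathscr{X}$. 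Second, for every $\epsilon_0 > 0$ the restriction $\M^{(\epsilon_0)}$ is finite, so the pushforward of $\M$ by $\abs{X}$ on $[\epsilon_0, \infty)$ has at most countably many atoms; hence the set of $\epsilon > 0$ with $\M(\abs{X} = \epsilon) > 0$ is countable and we may always choose a ``good'' threshold $\epsilon$ (one with $\M(\abs{X} = \epsilon) = 0$) arbitrarily small. Third, since $\M$ lives on $\mathscr{X}^*$ we have $\bigcap_{\epsilon > 0} \{\abs{X} < \epsilon\} = \emptyset$, so continuity from above gives $\M(\abs{X} < \epsilon) \to 0$ as $\epsilon \downarrow 0$ whenever $\M$ is finite.

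For the first equivalence, to prove (i)$\Rightarrow$(ii) fix a good threshold $\epsilon$ and $F \in \Cb$; it suffices to treat $F \ge 0$, the general case following by splitting $F$ into its positive and negative parts. I would sandwich $F \indic_{\{\abs{X} \ge \epsilon\}}$ between $F \tilde h_\delta$ and $F h_\delta$, where $h_\delta, \tilde h_\delta$ are continuous functions of $\abs{X}$ valued in $[0,1]$ approximating $\indic_{[\epsilon, \infty)}$ from above and below at scale $\delta < \epsilon$. Both $F h_\delta$ and $F \tilde h_\delta$ lie in $\Cb^*$ (their supports sit in $\{\abs{X} \ge \epsilon - \delta\}$), so vague convergence applies to them; letting $\delta \downarrow 0$ and combining dominated convergence with $\M(\abs{X} = \epsilon) = 0$ collapses both bounds to $\M^{(\epsilon)}[F]$, giving $\M_n^{(\epsilon)}[F] \to \M^{(\epsilon)}[F]$. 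The converse (ii)$\Rightarrow$(i) is immediate: any $F \in \Cb^*$ vanishes on $\{\abs{X} < \epsilon_0\}$ for some $\epsilon_0 > 0$, so picking a good $\epsilon < \epsilon_0$ gives $\M_n[F] = \M_n^{(\epsilon)}[F] \to \M^{(\epsilon)}[F] = \M[F]$ by weak convergence of the restrictions.

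For the second equivalence, (i)$\Rightarrow$(ii) follows by testing weak convergence against the constant functional $1 \in \Cb$, recalling that weak convergence presupposes $\M[1] < \infty$. The substantive direction is (ii)$\Rightarrow$(i). Given $F \in \Cb$ with $\norm{F}_\infty = M$ and a good threshold $\epsilon$, I would split
\[
    \M_n[F] - \M[F]
    = \big(\M_n^{(\epsilon)}[F] - \M^{(\epsilon)}[F]\big)
    + \M_n[F \indic_{\{\abs{X} < \epsilon\}}] - \M[F \indic_{\{\abs{X} < \epsilon\}}].
\]
The first bracket tends to $0$ by the first equivalence. For the remainder, the key point is that total mass convergence controls the mass near $\mathbf{0}$: writing $\M_n(\abs{X} < \epsilon) = \M_n[1] - \M_n^{(\epsilon)}[1]$ and using $\M_n[1] \to \M[1]$ together with $\M_n^{(\epsilon)}[1] \to \M^{(\epsilon)}[1]$ (the first equivalence with $F = 1$), one obtains $\M_n(\abs{X} < \epsilon) \to \M(\abs{X} < \epsilon)$. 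Hence $\limsup_n \abs{\M_n[F] - \M[F]} \le 2M\, \M(\abs{X} < \epsilon)$, which tends to $0$ as $\epsilon \downarrow 0$ by the third preliminary observation.

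The main obstacle is precisely this last step: unlike on $\R^d$, where tightness prevents mass from escaping to infinity, here the only way mass can be lost in the vague limit is by escaping to $\mathbf{0}$, and the hypothesis $\M_n[1] \to \M[1]$ is exactly what rules this out. The boundary bookkeeping at the level set $\{\abs{X} = \epsilon\}$ --- handled by choosing $\epsilon$ among the cocountably many continuity thresholds and by approximating indicators with continuous functions --- is routine, but must be carried out carefully so that one stays within $\Cb^*$ when invoking vague convergence.
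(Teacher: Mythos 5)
Your proof is correct, and its core mechanism is the same as the paper's --- convergence of total masses is exactly what forbids mass from leaking to $\mathbf{0}$ --- but the implementation differs at both steps. For the first equivalence, the paper simply cites the Portmanteau theorem for vague (weak\textsuperscript{\#}) convergence \cite[Theorem~2.4~(ii)]{hult2006regular}, whereas you reprove it from scratch by sandwiching $\indic_{\{\abs{X} \ge \epsilon\}}$ between continuous functions of the mass functional (which is indeed continuous, being the order-$1$ monomial with $\phi \equiv 1$); this is self-contained and uses only the definition of $\Cb^*$, at the cost of some routine bookkeeping --- note that in your dominated-convergence step the dominating function is $\norm{F}_\infty \indic_{\{\abs{X} \ge \epsilon - \delta_0\}}$, which is $\M$-integrable precisely by local finiteness, a point worth stating. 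For the second equivalence, the paper goes through the closed-set criterion of the classical Portmanteau theorem: it first establishes the one-sided estimate \eqref{eq:vague2weak1} by combining $\M_n[1] \to \M[1]$ with the Portmanteau lower bound on the open set $\{\abs{X} > \epsilon\}$, and then splits an arbitrary closed set $A$ along $\{\abs{X} \le \epsilon\}$ according to whether $\mathbf{0} \in A$. You instead work directly with test functionals $F \in \Cb$ and obtain the \emph{exact} convergence $\M_n(\abs{X} < \epsilon) \to \M(\abs{X} < \epsilon)$ at continuity thresholds via the identity $\M_n(\abs{X} < \epsilon) = \M_n[1] - \M_n^{(\epsilon)}[1]$, bounding the error by $2\norm{F}_\infty\, \M(\abs{X} < \epsilon)$ and letting $\epsilon \downarrow 0$ along the cocountably many good thresholds. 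Both routes are valid; yours buys a fully self-contained argument that never invokes either Portmanteau theorem, while the paper's is shorter because it delegates the first equivalence to the literature and recycles it only through the limsup bound rather than through exact mass convergence. Your preliminary observations (continuity of $\mathcal{X} \mapsto \abs{X}$, countability of atoms of the pushed-forward mass distribution, and $\M(\abs{X} < \epsilon) \to 0$ by continuity from above for finite $\M$ on $\mathscr{X}^*$) are all accurate and correctly deployed.
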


\begin{proof}
    The first part of the statement follows from Portmanteau's theorem
    for vague convergence, see for instance \cite[Theorem~2.4~(ii)]{hult2006regular}. 

    It is clear that $\mathrm{(i)} \implies \mathrm{(ii)}$. For the
    converse implication, we first show that     
    \begin{equation} \label{eq:vague2weak1}
        \adjustlimits\lim_{\epsilon \to 0} \limsup_{n \to \infty} \M_n( \abs{X} \le
        \epsilon) \le \M(\{\mathbf{0}\}).
    \end{equation}
    By Portmanteau's theorem for vague convergence 
    \cite[Lemma~4.1~(iv)]{kallenberg2017random} and Point (ii),
    \[
        \limsup_{n \to \infty} \M_n( \abs{X} \le \epsilon) 
        \le \M[1] - \liminf_{n \to \infty} \M_n( \abs{X} > \epsilon)
        \le \M(\abs{X} \le \epsilon),
    \]
    and \eqref{eq:vague2weak1} follows by letting $\epsilon \to 0$. Now, if $A$ be
    a closed subset of $\mathscr{X}$ such that $\mathbf{0} \in A$ and
    $\epsilon > 0$,
    \[
        \limsup_{n \to \infty} \M_n( A ) \le 
        \limsup_{n \to \infty} \M_n( A \cap \{ \abs{X} \le \epsilon \} )
        + \M( A \cap \{ \abs{X} \ge \epsilon \} )
    \]
    Letting $\epsilon \to 0$ and using \eqref{eq:vague2weak1} we obtain
    that 
    \[
        \limsup_{n \to \infty} \M_n( A ) \le 
        \M(A \cap \{ \mathbf{0} \})
        + \M( A \cap \{ \abs{X} > 0 \} )
        = \M(A).
    \]
    A similar bound holds directly by Portmanteau's theorem for
    vague convergence if $\mathbf{0} \notin A$, and we can conclude using
    the usual version of Portmanteau's theorem for weak convergence
    \cite[Theorem~4.25]{kallenberg2002foundations}.
\end{proof}

These first properties of vague convergence already provide us
an interesting result in the context of genealogies of branching
processes. In a population interpretation, the first item of the
following result is a ``weak'' estimate on the probability of survival.
It gives the asymptotic behaviour of the probability that the population
size is not negligible.

\begin{proposition} \label{prop:weakEstimate}
    Let $(\mathcal{X}_n)_{n \ge 1}$ and $\mathcal{X}$ be random elements
    of $\mathscr{X}$. Suppose that there exists a sequence $c_n \to
    \infty$ such that $(c_n \mathscr{L}(\mathcal{X}_n))_{n \ge 1}$
    converges to $(\mathscr{L}(\mathcal{X})$ vaguely in the Gromov-weak
    topology. Then there exists $(\epsilon_n)_{n \ge 1} \in (0,\infty)$
    such that $\epsilon_n \to 0$ and
    \begin{equation} \label{eq:weakKolmogorov}
        \P( \abs{X_n} \ge \epsilon_n ) \sim \frac{1}{c_n} \P( \abs{X} > 0 ),
        \qquad \text{as $n \to \infty$.}
    \end{equation}
    Moreover, $(\mathcal{X}_n)_{n \ge 1}$ converges in distribution
    conditional on $\{ \abs{X_n} \ge \epsilon_n \}$ to $\mathcal{X}$
    conditional on $\{ \abs{X} > 0 \}$ for the Gromov-weak topology.
\end{proposition}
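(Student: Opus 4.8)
The plan is to read off both conclusions from the vague convergence hypothesis by applying Proposition~\ref{prop:vagueCharaterization}. Write $\M_n = c_n \mathscr{L}(\mathcal{X}_n)$ and $\M = \mathscr{L}(\mathcal{X})$, so that $\M_n \to \M$ vaguely. The key quantities are the masses of the sets $\{\abs{X} \ge \epsilon\}$, and the whole argument hinges on choosing the cutoff $\epsilon_n \to 0$ slowly enough that $\M_n(\abs{X} \ge \epsilon_n)$ still captures the full limiting mass $\M(\abs{X} > 0)$.

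\medskip

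First I would set up the key estimate. For any fixed $\epsilon > 0$ with $\M(\abs{X} = \epsilon) = 0$, Proposition~\ref{prop:vagueCharaterization} gives weak convergence of the restrictions $\M_n^{(\epsilon)} \to \M^{(\epsilon)}$, and in particular, applying it to the total-mass functional (which is continuous and bounded on $\{\abs{X} \ge \epsilon\}$), the total masses converge:
\begin{equation} \label{eq:massConv}
    \M_n(\abs{X} \ge \epsilon) = c_n \P(\abs{X_n} \ge \epsilon)
    \goesto{n \to \infty} \M(\abs{X} \ge \epsilon).
\end{equation}
Since $\M$ is a locally finite measure on $\mathscr{X}^*$, the set of atoms $\{\epsilon : \M(\abs{X} = \epsilon) > 0\}$ is at most countable, so \eqref{eq:massConv} holds along a sequence $\epsilon \downarrow 0$; and because $\M$ gives no mass to $\mathbf{0}$ when restricted to $\mathscr{X}^*$, monotone convergence yields $\M(\abs{X} \ge \epsilon) \to \M(\abs{X} > 0)$ as $\epsilon \to 0$. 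The extraction of the diagonal sequence $\epsilon_n \to 0$ is the technical heart: I would use a standard diagonal argument, choosing for each $j$ a threshold $\delta_j \downarrow 0$ (avoiding atoms of $\M$) and then an index $N_j$ beyond which $c_n \P(\abs{X_n} \ge \delta_j)$ is within $1/j$ of $\M(\abs{X} \ge \delta_j)$, and finally setting $\epsilon_n = \delta_j$ for $N_j \le n < N_{j+1}$. This forces both $\epsilon_n \to 0$ and $c_n \P(\abs{X_n} \ge \epsilon_n) \to \M(\abs{X} > 0)$, which is exactly \eqref{eq:weakKolmogorov} after rearranging.

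\medskip

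For the conditional convergence, observe that the law of $\mathcal{X}_n$ conditioned on $\{\abs{X_n} \ge \epsilon_n\}$ is the normalised restriction
\[
    \frac{\M_n^{(\epsilon_n)}}{\M_n(\abs{X} \ge \epsilon_n)},
\]
while $\mathcal{X}$ conditioned on $\{\abs{X} > 0\}$ has law $\M^{(0^+)}/\M(\abs{X} > 0)$, where $\M^{(0^+)}$ denotes the restriction to $\mathscr{X}^* = \{\abs{X} > 0\}$. I would test against an arbitrary $F \in \Cb$: decompose $\M_n[F \indic_{\{\abs{X} \ge \epsilon_n\}}]$ by comparing it to $\M_n[F \indic_{\{\abs{X} \ge \delta\}}]$ for a fixed small $\delta$, controlling the error $\M_n(\delta > \abs{X} \ge \epsilon_n)$ using \eqref{eq:massConv} and the fact that this mass tends to $\M(\abs{X} > 0) - \M(\abs{X} \ge \delta)$, which is small for small $\delta$. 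On the fixed-$\delta$ piece the weak convergence $\M_n^{(\delta)} \to \M^{(\delta)}$ of Proposition~\ref{prop:vagueCharaterization} applies directly. Dividing by the mass, whose limit is $\M(\abs{X} > 0) > 0$, gives convergence of the conditional expectations of every bounded continuous $F$, hence the claimed convergence in distribution.

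\medskip

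The main obstacle is the diagonal extraction of $\epsilon_n$: the convergence in \eqref{eq:massConv} is only available for each fixed $\epsilon$, so I must interleave the two limits $\epsilon \to 0$ and $n \to \infty$ carefully so that $\epsilon_n$ decays slowly relative to the rate at which \eqref{eq:massConv} kicks in, while still tending to zero. A subtlety worth flagging is that one needs $\M(\abs{X} > 0) > 0$ for the equivalence in \eqref{eq:weakKolmogorov} and the conditioning to be meaningful; this is implicitly guaranteed by $\mathcal{X}$ being a genuine (nontrivial) random element of $\mathscr{X}$, and I would note it explicitly. The remaining estimates are routine truncation arguments of the kind already used in the proof of Proposition~\ref{prop:vagueCharaterization}.
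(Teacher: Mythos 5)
Your proof is correct and takes essentially the same route as the paper's: convergence of the masses $c_n\P(\abs{X_n}\ge\epsilon)$ at continuity points via Proposition~\ref{prop:vagueCharaterization}, a diagonal extraction of $\epsilon_n\to 0$ along thresholds avoiding the (countably many) atoms, and then passage to the conditional laws. The only cosmetic difference is in the second claim, where the paper simply notes that the normalised restricted laws converge vaguely and are probability measures, so the second equivalence of Proposition~\ref{prop:vagueCharaterization} upgrades this to weak convergence, whereas you re-derive that upgrade by hand with a fixed-$\delta$ truncation argument; both are sound.
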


\begin{proof}
    Fix a nonincreasing sequence $(\eta_k)_{k \ge 1}$ such that
    $\P(\abs{X} = \eta_k) = 0$ for each $k \ge 1$ and $\eta_k \to 0$. By 
    Proposition~\ref{prop:vagueCharaterization},
    \[
        c_n \P(\abs{X_n} \ge \eta_k) \goesto{n \to \infty} 
        \P(\abs{X} \ge \eta_k).
    \]
    By extracting a subsequence, we can find an increasing sequence of
    indices $(k_n)_{n \ge 1}$ such that
    \[
        \abs[\big]{c_n \P(\abs{X_n} \ge \eta_{k_n}) -
        \P(\abs{X} \ge \eta_{k_n})} \goesto{n \to \infty} 0.
    \]
    The estimate \eqref{eq:weakKolmogorov} follows by setting $\epsilon_n =
    \eta_{k_n}$. For the second part, \eqref{eq:weakKolmogorov}
    entails that, vaguely,
    \[
        \frac{\mathscr{L}\big( 
            \indic_{\{ \abs{X_n} \ge \epsilon_n \}} \mathcal{X}_n 
        \big)}{\P(\abs{X_n} \ge \epsilon_n)}
        \goesto{n \to \infty}
        \frac{\mathscr{L}( \indic_{\{ \abs{X} > 0 \}} \mathcal{X})}{\P(\abs{X} > 0)}.
    \]
    (These measures might have an atom at $\mathbf{0}$ and have to be
    understood as restricted to $\mathscr{X}^*$.) Since these measures are
    probability measures, Proposition~\ref{prop:vagueCharaterization} shows that
    the vague convergence can be reinforced to a weak convergence, hence
    the second part of the statement.
\end{proof}

\subsection{Continuous mapping theorem}
\label{sec:continuousMapping}

An essential tool in the theory of convergence in distribution is the continuous
mapping theorem. Some basic forms of this result can be found in the
literature for vague convergence, see \cite[Theorem~2.5]{hult2006regular}
or \cite[Theorem~2.3]{Lindskog2014}. We provide a stronger version
of these results for sequences of functionals, which we adapt from the
corresponding result for weak convergence, found in
\cite[Theorem~4.27]{kallenberg2002foundations}.

In this section, we fix a Polish space $\mathscr{Y}$ and a map $G \colon
\mathscr{X} \to \mathscr{Y}$. By using $\tilde{\mathbf{0}} =
G(\mathbf{0})$ as a distinguished element of $\mathscr{Y}$, we can define
a notion of vague convergence for measures on $\mathscr{Y} \setminus
\{G(\mathbf{0})\}$ as in Section~\ref{sec:otherConv}. As an example, if
$\mathscr{Y} = [0,\infty)$, the map $G \colon \mathcal{X} \mapsto
\abs{X}$ leads to the vague topology on $(0, \infty]$ in the usual sense.

\begin{theorem}[Continuous mapping] \label{thm:continuousMapping}
    Let $\M, \M_1, \M_2, \dots$ be locally finite measures on
    $\mathscr{X}^*$, and $G, G_1, G_2, \dots$ be maps from $\mathscr{X}$
    to $\mathscr{Y}$. Suppose that $(\M_n)_{n \ge 1}$ converges vaguely
    to $\M$ and that there exists $C \subseteq \mathscr{X}$ such that:
    \begin{enumerate}[\rm (i)]
        \item $\mathbf{0} \in C$ and $\M(\mathscr{X} \setminus C) = 0$;
        \item  for all $(\mathcal{X}_n)_{n \ge 1}$ converging to
            $\mathcal{X} \in C$, we have $G_n(\mathcal{X}_n) \goesto{n
            \to \infty} G(\mathcal{X})$.
    \end{enumerate}
    Then, the sequence of pushforward measures $((G_n)_* \M_n)_{n \ge 1}$ 
    converges vaguely in $\mathscr{Y} \setminus \{G(\mathbf{0})\}$ to $G_*\M$.
\end{theorem}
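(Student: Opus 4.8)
The plan is to verify the definition of vague convergence directly: for every $F \in \Cb^*$ on $\mathscr{Y}$ I must show that $\M_n[F \circ G_n] \to \M[F \circ G]$, since $((G_n)_*\M_n)[F] = \M_n[F \circ G_n]$ and $(G_*\M)[F] = \M[F\circ G]$. Fix such an $F$ and let $\delta > 0$ be such that $F$ vanishes on the open ball $\{\mathcal{Y} \in \mathscr{Y} : d(\mathcal{Y}, G(\mathbf{0})) < \delta\}$.

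The key preliminary step, and the one I expect to be the main obstacle, is a uniform estimate: there exist $\epsilon > 0$ and an integer $N$ such that $G_n$ maps $\{\abs{X} < \epsilon\}$ into that ball for all $n \ge N$, so that $F \circ G_n$ vanishes on $\{\abs{X} < \epsilon\}$. I would prove this by contradiction. If it failed, then along some subsequence $(n_j)$ I could select mmm-spaces $\mathcal{X}^{(j)}$ with $\abs{X^{(j)}} \to 0$ but $d(G_{n_j}(\mathcal{X}^{(j)}), G(\mathbf{0})) \ge \delta$; splicing these into the constant sequence equal to $\mathbf{0}$ (setting the $n_j$-th term to $\mathcal{X}^{(j)}$ and all others to $\mathbf{0}$) produces a sequence converging to $\mathbf{0} \in C$ whose images under the $G_n$ fail to converge to $G(\mathbf{0})$, contradicting hypothesis (ii). The crux is that (ii) applies at the distinguished point $\mathbf{0}$ itself, which is what lets me control the \emph{varying} maps $G_n$ uniformly on a neighbourhood of $\mathbf{0}$. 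A parallel but easier argument, applying (ii) along the constant sequence $\mathcal{X}_n \equiv \mathcal{X}$ together with the continuity of $F$, shows that $F \circ G$ vanishes on $C \cap \{\abs{X} < \epsilon\}$; since $\M(\mathscr{X} \setminus C) = 0$, the functional $F \circ G$ is supported $\M$-almost everywhere on $\{\abs{X} \ge \epsilon\}$.

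With the supports controlled, I would reduce to weak convergence of finite measures. Choosing $0 < \epsilon' < \epsilon$ with $\M(\abs{X} = \epsilon') = 0$, the vanishing just established gives $\M_n[F\circ G_n] = \M_n^{(\epsilon')}[F\circ G_n]$ for $n \ge N$ and $\M[F\circ G] = \M^{(\epsilon')}[F\circ G]$. By Proposition~\ref{prop:vagueCharaterization}, the restricted measures $\M_n^{(\epsilon')}$ converge weakly to the finite measure $\M^{(\epsilon')}$. It then remains to pass the converging functionals $F \circ G_n \to F \circ G$ through this weak convergence, which is exactly the setting of the extended continuous mapping theorem. The functionals $g_n = F\circ G_n$ are uniformly bounded by $\norm{F}_\infty$, and for any $\mathcal{X}_n \to \mathcal{X} \in C$ hypothesis (ii) and the continuity of $F$ yield $g_n(\mathcal{X}_n) \to (F\circ G)(\mathcal{X})$, while $\M^{(\epsilon')}$ charges no mass outside $C$. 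Invoking \cite[Theorem~4.27]{kallenberg2002foundations} in its form for finite measures gives $\M_n^{(\epsilon')}[F\circ G_n] \to \M^{(\epsilon')}[F \circ G]$, and combining with the previous identities completes the argument.

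Two routine points I would verify along the way. First, the total-mass map $\mathcal{X} \mapsto \abs{X}$ is continuous, being the monomial of order one with $\phi \equiv 1$; this ensures the level sets $\{\abs{X} < \epsilon\}$ are open and legitimises the restriction operation. Second, the finite-measure version of the extended mapping theorem is applicable here, since the total masses satisfy $\M_n^{(\epsilon')}[1] \to \M^{(\epsilon')}[1]$ by the choice $\M(\abs{X} = \epsilon') = 0$.
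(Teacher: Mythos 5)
Your proof is correct and takes essentially the same route as the paper's: the same key uniform estimate (proved by contradiction using hypothesis (ii) at the point $\mathbf{0} \in C$) showing that the images of small-mass spaces stay near $G(\mathbf{0})$, followed by restriction to $\{\abs{X} \ge \epsilon'\}$, weak convergence of the restrictions via Proposition~\ref{prop:vagueCharaterization}, and the extended mapping theorem \cite[Theorem~4.27]{kallenberg2002foundations}. If anything, your write-up is slightly more careful on two points the paper glosses over, namely the choice of $\epsilon'$ with $\M(\abs{X} = \epsilon') = 0$ and the use of the continuity of $F$ to conclude that $F \circ G$ vanishes on $C \cap \{\abs{X} < \epsilon\}$.
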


The previous result has two direct corollaries which are sufficient in
most cases. Point (i) of the next result is what is usually referred to
as ``continuous mapping,'' and was already derived for vague convergence
in \cite[Theorem~2.5]{hult2006regular}, see also
\cite[Theorem~2.3]{Lindskog2014}.

\begin{corollary}
    Let $\M, \M_1, \M_2, \dots$ be locally finite measures on
    $\mathscr{X}^*$ and $G \colon \mathscr{X} \to \mathscr{Y}$. Suppose
    that $(\M_n)_{n \ge 1}$ converges vaguely to $\M$. 
    \begin{enumerate}[\rm (i)]
        \item If $G$ is continuous $\M$-a.e.\ and at $\mathbf{0}$, then
            $(G_*\M_n)_{n \ge 1}$ converges vaguely to $G_*\M$.
        \item If $G_1, G_2, \dots$ are such that $G_n \goesto{n \to
            \infty} G$ uniformly on compact sets of $\mathscr{X}$ and $G$
            is continuous, then $((G_n)_*\M_n)_{n \ge 1}$ converges
            vaguely to $G_*\M$.
    \end{enumerate}
\end{corollary}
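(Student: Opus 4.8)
The plan is to derive both points from Theorem~\ref{thm:continuousMapping} by selecting an appropriate set $C$ and an appropriate sequence of maps, so that only the verification of hypothesis (ii) of that theorem remains. Throughout, fix a metric $d_{\mathscr{Y}}$ metrizing the topology of $\mathscr{Y}$.

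For Point (i), I would take $G_n = G$ for every $n \ge 1$ and let
$C = \{\mathbf{0}\} \cup \{\mathcal{X} \in \mathscr{X} : G \text{ is continuous at } \mathcal{X}\}$.
Hypothesis (i) of Theorem~\ref{thm:continuousMapping} is then satisfied: continuity of $G$ at $\mathbf{0}$ gives $\mathbf{0} \in C$, while the assumption that $G$ is continuous $\M$-a.e.\ gives $\M(\mathscr{X} \setminus C) = 0$. Hypothesis (ii) is immediate, since for any sequence $\mathcal{X}_n \to \mathcal{X}$ with $\mathcal{X} \in C$ the map $G$ is continuous at the limit point $\mathcal{X}$, whence $G(\mathcal{X}_n) \goesto{n \to \infty} G(\mathcal{X})$. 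The conclusion of the theorem is exactly the claimed vague convergence of $(G_*\M_n)_{n \ge 1}$ to $G_*\M$.

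For Point (ii), I would take $C = \mathscr{X}$, so that hypothesis (i) holds trivially ($\mathbf{0} \in \mathscr{X}$ and $\M(\emptyset) = 0$). The only work is to check hypothesis (ii): given $\mathcal{X}_n \to \mathcal{X}$ in $\mathscr{X}$, I would use that the set $K = \{\mathcal{X}_n : n \ge 1\} \cup \{\mathcal{X}\}$ is compact, being a convergent sequence together with its limit, and split
\[
    d_{\mathscr{Y}}\big(G_n(\mathcal{X}_n), G(\mathcal{X})\big) \le
    d_{\mathscr{Y}}\big(G_n(\mathcal{X}_n), G(\mathcal{X}_n)\big)
    + d_{\mathscr{Y}}\big(G(\mathcal{X}_n), G(\mathcal{X})\big).
\]
The first term is at most $\sup_{\mathcal{Y} \in K} d_{\mathscr{Y}}\big(G_n(\mathcal{Y}), G(\mathcal{Y})\big)$, which tends to $0$ by uniform convergence of $G_n$ to $G$ on the compact set $K$; the second tends to $0$ by continuity of $G$. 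Hence $G_n(\mathcal{X}_n) \goesto{n \to \infty} G(\mathcal{X})$, verifying hypothesis (ii), and Theorem~\ref{thm:continuousMapping} yields the result.

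The only nontrivial step is the verification of hypothesis (ii) in Point (ii), where one combines uniform convergence on compact sets with continuity of the limit map, relying on the compactness of a convergent sequence together with its limit. This is a standard argument, so I expect no serious obstacle; the substance of the corollary lies entirely in the main theorem, and these two reductions merely package it in the forms most convenient for applications.
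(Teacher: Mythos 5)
Your proposal is correct and follows exactly the route the paper intends: the corollary is stated there as a direct consequence of Theorem~\ref{thm:continuousMapping} with no separate proof given, and your two reductions (taking $C$ to be $\{\mathbf{0}\}$ together with the continuity points of $G$ for Point (i), and $C = \mathscr{X}$ with the compact set $K = \{\mathcal{X}_n : n \ge 1\} \cup \{\mathcal{X}\}$ and the triangle-inequality argument for Point (ii)) are the standard way to fill in that omitted verification. No gaps.
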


\begin{proof}[Proof of Theorem~\ref{thm:continuousMapping}]
    Let $d$ be a distance that metrizes the topology on $\mathscr{Y}$.
    For $\epsilon > 0$, let us denote by $B_\epsilon$ the open ball of
    $(\mathscr{Y}, d)$ of radius $\epsilon$ and center $G(\mathbf{0})$.
    First, we claim that we can find $\eta > 0$ such that, for $n$ large
    enough,
    \[
        \forall \mathcal{X} \in \mathscr{X},\quad 
        G_n(\mathcal{X}) \notin B_\epsilon \implies \abs{X} \ge \eta.
    \]
    Otherwise, we could find an increasing sequence of indices $(n_k)_{k
    \ge 1}$ and a sequence $(\mathcal{X}_{n_k})_{k \ge 1}$ such that 
    \[
        G_{n_k}(\mathcal{X}_{n_k}) \notin B_\epsilon,
        \qquad
        \mathcal{X}_{n_k} \goesto{k \to \infty} \mathbf{0},
    \]
    which would contradict our assumptions. Therefore, for any continuous
    bounded $F \colon \mathscr{Y} \to \R$ and $n$ large enough,
    \begin{equation*}
        \int_{\mathscr{X}^*} F(G_n(\mathcal{X}))
            \indic_{\{ G_n(\mathcal{X}) \notin B_\epsilon\}} 
            \,\M_n(\diff \mathcal{X}) 
        = \int_{\mathscr{X}^*} F(G_n(\mathcal{X}))
            \indic_{\{ \abs{G_n(\mathcal{X})} \notin B_\epsilon\}} 
            \indic_{\{ \abs{X} \ge \eta\}} 
            \,\M_n(\diff \mathcal{X}).
    \end{equation*}
    Since $(\M^{(\eta)}_n)_{n \ge 1}$ converges weakly to $\M^{(\eta)}$,
    the usual mapping theorem for weak convergence, see
    \cite[Theorem~4.27]{kallenberg2002foundations}, shows that
    the restriction of $(G_n)_* \M_n$ to $\mathscr{Y} \setminus
    B_\epsilon$ converges to that of $G_*\M$ in the weak topology of
    measures on $\mathscr{Y}$, for a.e.\ $\epsilon > 0$. Therefore, the
    characterization of vague convergence in \cite[Theorem~2.2]{hult2006regular},
    similar to the first point of
    Proposition~\ref{prop:vagueCharaterization} shows that
    $((G_n)_*\M_n)_{n \ge 1}$ converges vaguely to $(G_*\M)$.
\end{proof}



\section{Method of moments for vague convergence}
\label{sec:momentMethod}

In this section we prove our method of moments, which is the main result
of our work. We start with the definition of the moment of a random
metric measure space.

\begin{definition}[Moment of a random mmm-space]
    For $k \ge 1$, the \emph{moment measure} of order $k$ of a measure
    $\M$ on $\mathscr{X}$ is unique the measure $\mathrm{M}_k$ on
    $\R_+^{k\times k} \times E^k$ such that
    \[
        \mathrm{M}_k[ \phi ] \coloneqq \M[ \Phi ]
        = \int_{\mathscr{X}} \Big( \int_{(X\times E)^k} 
            \phi\big( d(\mathbf{x}), \mathbf{e} \big)
            \mu^k(\diff \mathbf{x}, \diff \mathbf{e}) \Big) 
        \,\M(\diff \mathcal{X}),
    \]
    for all $\phi \colon \R_+^{k\times k} \times E^k \to \R_+$. By
    convention, we let the moment measure of order $k=0$ of $\M$ be its
    total mass $\M[1]$.
\end{definition}

It is instructive to compare this definition to the more usual notion of
moment measures of a random measure \cite[Section~9.5]{daley2008}.
If $(X,d,\mu)$ is a random marked metric measure space, the projection
$\nu$ of $\mu$ on its second coordinate is a random measure on $E$. Its
$k$-th moment measure is defined as the measure $\widetilde{\mathrm{M}}_k$ on
$E^k$ such that
\[
    \widetilde{\mathrm{M}}_k[\phi] 
    = \E\Big[ 
        \int_{E^k} \phi( \mathbf{e} ) \nu^k(\diff \mathbf{e}) 
    \Big]
    = \E\Big[ 
        \int_{(X\times E)^k} \phi( \mathbf{e} ) \mu^k(\diff \mathbf{x}, \diff \mathbf{e}) 
    \Big],
\]
for all $\phi \colon E^k \to \R_+$. Clearly, $\widetilde{\mathrm{M}}_k$
is the projection of $\mathrm{M}_k$ on $E^k$. In that sense, the moment
measure of a random metric measure space extends that of a random measure
to capture the information about the metric structure of $(X,d,\mu)$.

We can now state our method of moments. A version of this result for weak
convergence was already obtained in \cite{depperschmidt2011marked,
foutel2023} but requires convergence of moments of all
order $k \ge 0$.

\begin{theorem}[Method of moments] \label{thm:methodMoments}
    Let $\M, \M_1, \M_2, \dots$ be measures on $\mathscr{X}$, with
    $k$-th moment measures $\mathrm{M}_k, \mathrm{M}_{k,1},
    \mathrm{M}_{k,2}, \dots$, for $k \ge 0$. Suppose that 
    \begin{equation} \label{eq:convMoments}
        \forall k \ge 1,\quad 
        \mathrm{M}_{k,n} \goesto{n \to \infty} \mathrm{M}_k
    \end{equation}
    weakly as measure on $\R_+^{k\times k} \times E^k$, and that $\M$
    satisfies
    \begin{equation} \label{eq:Carleman}
        \sum_{k \ge 1} \frac{1}{\mathrm{M}_k[1]^{\frac{1}{2k}}} = \infty.
    \end{equation}
    Then $(\M_n)_{n \ge 1}$ converges vaguely to $\M$ in the Gromov-weak
    topology. If \eqref{eq:convMoments} also holds for $k = 0$, the
    convergence of $(\M_n)_{n \ge 1}$ holds weakly in the Gromov-weak topology.
\end{theorem}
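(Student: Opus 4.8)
\section*{Proof proposal}

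The plan is to run the classical two–step method of moments --- compactness plus moment determinacy --- but entirely inside the vague topology of Section~\ref{sec:vagueConvergence}, exploiting that a monomial of order $k\ge 1$ vanishes like $\abs{X}^k$ near $\mathbf{0}$. First I would record the mass estimates $\M_n(\{\abs{X}\ge\epsilon\})\le\epsilon^{-k}\mathrm{M}_{k,n}[1]$ and $\M_n(\{\abs{X}>R\})\le R^{-1}\mathrm{M}_{1,n}[1]$; since the weak convergence \eqref{eq:convMoments} of finite measures forces $\mathrm{M}_{k,n}[1]\to\mathrm{M}_k[1]<\infty$, every $\M_n$ is locally finite on $\mathscr{X}^*$ with mass uniformly bounded on each shell $\{\abs{X}\ge\epsilon\}$. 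To obtain vague relative compactness I would then prove tightness of the restricted finite measures $\M_n^{(\epsilon)}$ on the Polish space $(\mathscr{X},d_{\GP})$: their masses are bounded, the large-mass tail is controlled by the estimate above, and the ``shape'' is controlled through the Greven--Pfaffelhuber--Winter relative-compactness criterion \cite{greven2009convergence,depperschmidt2011marked} using the distance- and mark-marginals of $\mathrm{M}_{2,n}$, which converge and are therefore tight. Concretely, $\int_{\mathscr{X}^*}\abs{X}^2\,\nu^{\mathcal{X}}_2([R',\infty))\,\M_n(\diff\mathcal{X})$ (with $\nu^{\mathcal{X}}_2$ the law of the distance between two normalised $\mu_X$-sampled points) is the integral of a distance-monomial, hence converges to a finite $\M$-quantity vanishing as $R'\to\infty$; a Markov inequality upgrades this averaged bound to a compact set of high $\M_n^{(\epsilon)}$-mass. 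Since the vague topology is metrizable (Section~\ref{sec:otherConv}), it then suffices to identify every subsequential vague limit.

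The heart of the argument is to show that any vague subsequential limit $\M'$ equals $\M$. I would first transfer the moments of order $k\ge 2$ to $\M'$. Fix a monomial $\Phi$ of order $k\ge 2$ with symbol $\phi\ge 0$, and cutoffs $\chi_\epsilon,\psi_R$ of $\abs{X}$ away from $\mathbf{0}$ and away from infinity, so that $\Phi\cdot\chi_\epsilon(\abs{X})\psi_R(\abs{X})\in\Cb^*$ and vague convergence applies to it. The discrepancy with $\M_n[\Phi]=\mathrm{M}_{k,n}[\phi]$ is at most $\norm{\phi}_\infty\big(\int_{\abs{X}<\epsilon}\abs{X}^k\,\M_n+\int_{\abs{X}>R}\abs{X}^k\,\M_n\big)\le\norm{\phi}_\infty\big(\epsilon\,\mathrm{M}_{k-1,n}[1]+R^{-1}\mathrm{M}_{k+1,n}[1]\big)$, which is uniformly small as $\epsilon\to0$, $R\to\infty$. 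Passing to the limit in $n$ and then removing the cutoffs by monotone convergence yields $\M'[\Phi]=\mathrm{M}_k[\phi]$ for every monomial of order $k\ge 2$. The restriction $k\ge 2$ is essential: the near-$\mathbf{0}$ bound $\int_{\abs{X}<\epsilon}\abs{X}^k\le\epsilon\,\mathrm{M}_{k-1,n}[1]$ uses a genuine moment only when $k-1\ge 1$, and the order-$1$ contribution near $\mathbf{0}$ --- precisely the ``$k=0$'' data we are avoiding --- cannot be controlled uniformly.

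To deduce $\M=\M'$ from equality of all moments of order $k\ge 2$, I would reweight by mass: put $\nu\coloneqq\abs{X}^2\,\M$ and $\nu'\coloneqq\abs{X}^2\,\M'$, finite measures on $\mathscr{X}^*$ of total mass $\mathrm{M}_2[1]<\infty$ (finiteness of $\nu'$ is the previous step with $\phi=1$, $k=2$). Since multiplying a monomial of order $j$ by $\abs{X}^2$ yields a monomial of order $j+2$, the measures $\nu,\nu'$ have identical moment measures of \emph{all} orders $j\ge 0$, with mass-moments $\int\abs{X}^j\,\nu=\mathrm{M}_{j+2}[1]$. I would then invoke the determinacy half of the classical method of moments for finite measures on $\mathscr{X}$ \cite{depperschmidt2011marked,foutel2023} (reprovable by Stone--Weierstrass density of the monomial algebra together with Carleman's condition), whose Carleman hypothesis $\sum_j\mathrm{M}_{j+2}[1]^{-1/(2j)}=\infty$ follows from \eqref{eq:Carleman} because moment sequences are log-convex and Carleman's criterion is stable under a finite index shift. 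This gives $\nu=\nu'$, hence $\M=\M'$ on $\{\abs{X}>0\}=\mathscr{X}^*$, so $(\M_n)_{n\ge1}$ converges vaguely to $\M$.

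Finally, if \eqref{eq:convMoments} holds for $k=0$ as well, then the total masses converge, $\M_n[1]=\mathrm{M}_{0,n}\to\mathrm{M}_0=\M[1]<\infty$, and the second equivalence of Proposition~\ref{prop:vagueCharaterization} upgrades vague convergence to weak convergence in the Gromov-weak topology. The step I expect to be the main obstacle is the identification in the last two paragraphs: one must pin down a limit that may carry infinite mass near $\mathbf{0}$ using only orders $k\ge 1$. The two honest difficulties are (i) that the order-$1$ contribution near $\mathbf{0}$ is uncontrollable, which forces the detour through orders $k\ge 2$ and the $\abs{X}^2$-reweighting, and (ii) verifying that the Carleman condition \eqref{eq:Carleman} genuinely survives the resulting index shift.
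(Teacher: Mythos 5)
Your overall architecture --- vague relative compactness plus identification of every subsequential limit --- is a genuinely different route from the paper's, and your identification steps are sound. The cutoff bound $\norm{\phi}_\infty\bigl(\epsilon\,\mathrm{M}_{k-1,n}[1]+R^{-1}\mathrm{M}_{k+1,n}[1]\bigr)$ does correctly transfer the moment measures of order $k\ge 2$ to any vague limit point, and the $\abs{X}^2$-reweighting with an index-shifted Carleman condition is legitimate: the shift stability you assert is exactly the elementary computation the paper itself performs, namely $\sum_k m_{k+p}^{-1/(2k)}=\infty$ for the moments $m_k=\mathrm{M}_k[1]$, with $p$ the order of a multiplying monomial. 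The $k=0$ upgrade via Proposition~\ref{prop:vagueCharaterization} is also exactly the paper's. By contrast, the paper avoids any compactness argument on $\mathscr{X}$: for each monomial $\Phi$ of order $p\ge 1$ it forms the one-dimensional measure $\M_n^\Phi[f]=\M_n[f(\abs{X})\Phi(\mathcal{X})]$, applies the classical method of moments on $\R_+$ (where tightness is automatic from the moments), and then converts convergence of $\M_n[g(\abs{X})\Phi(\hat{\mathcal{X}})]$ into weak convergence of the restrictions $\M_n^{(\epsilon)}$ through the homeomorphism $\mathcal{X}\mapsto(\abs{X},\hat{\mathcal{X}})$.

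The genuine gap in your version is the compactness step. Tightness of the distance- and mark-marginals of $\mathrm{M}_{2,n}$, plus your tail estimate on $\nu_2^{\mathcal{X}}$, does not verify the Greven--Pfaffelhuber--Winter pre-compactness criterion: that criterion contains a second, independent condition --- a uniform modulus of mass distribution, of the type $\sup_n \mu_n\bigl(\{x:\mu_n(B_\epsilon(x)\times E)\le\delta\}\bigr)\le\epsilon$ for small $\delta$ --- which controls thin parts of the spaces and does not follow from any marginal tightness or Markov bound. Concretely, take $\mathcal{X}_n$ to be the uniform probability measure on $n$ points at pairwise distance $1$: every distance marginal converges (hence is tight), yet $(\mathcal{X}_n)_{n\ge1}$ has no Gromov-weak limit point, so the conditions you check cannot produce the compact sets you need. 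In the setting of the theorem relative compactness is ultimately true, but any proof of it must use the hypothesis your compactness paragraph never touches: that the limiting moment measures are realized by an actual measure $\M$ on $\mathscr{X}$ (in the example above no such $\M$ exists, since $\mu\otimes\mu(d=1)=1$ is incompatible with separability of the support). Transferring the small-ball condition from $\M$ to $\M_n$ via sandwich estimates on monomials of all orders is precisely the content of the ``polynomials are convergence determining'' results of \cite{greven2009convergence,depperschmidt2011marked}, which the paper's final step quietly invokes and which your sketch would need to reproduce. So: either import that transfer argument explicitly in place of your marginal-tightness claim, or abandon standalone compactness and follow the paper's reduction to the one-dimensional method of moments; the rest of your proof (moment transfer for $k\ge2$, reweighting, shifted Carleman, determinacy) then stands.
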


\begin{remark}[Carleman's condition]
    Condition \eqref{eq:Carleman} is a well known sufficient condition
    for a measure on $\R_+$ to be uniquely determined by its moments
    called Carleman's condition -- see the remark after Theorem~3.3.25 of
    \cite{Durrett_2019}. It is quite remarkable that the condition for a
    measure on $\mathscr{X}^*$ to be uniquely determined by its moments
    reduces to a condition on its total mass only. A similar phenomenon
    already occurs for random measures, see \cite{Zessin1983}. 
\end{remark}


\begin{proof}[Proof of Theorem~\ref{thm:methodMoments}]
    Let $\Phi$ be the non-negative monomial of order $p \ge 1$
    defined in \eqref{eq:monomial} from the functional $\phi \colon
    \R_+^{p\times p} \times E^p \to \R_+$. Introduce the measure
    $\M_n^\Phi$ on $\R_+$ such that, for any non-negative $f \colon \R_+
    \to \R_+$, 
    \[
        \M_n^\Phi[ f ] = \int_{\mathscr{X}} f(\abs{X}) \Phi( \mathcal{X}) 
        \,\M_n(\diff \mathcal{X}).
    \]
    Similarly, we introduce $\M^\Phi$ by replacing $\M_n$ by $\M$ in the
    previous definition. Then, the (usual) moments of $\M_\Phi$ can be
    computed as 
    \[
        \forall k \ge 0,\quad 
        \M_n^\Phi[ x^k ] = \M_n\big[ \abs{X}^k \Phi(\mathcal{X}) \big]
        \goesto{n \to \infty}
        \M\big[ \abs{X}^k \Phi(\mathcal{X}) \big] = \M^\Phi[ x^k ].
    \]
    For the convergence, we have used the crucial fact that 
    $\mathcal{X} \mapsto \abs{X}^k \Phi(\mathcal{X})$ is again a
    monomial. This can be seen by applying Fubini's theorem in
    \eqref{eq:monomial}. 

    Writing $m_k = \mathrm{M}_k[1]$, the moments of $\M^\Phi$ verify
    that, for any $c \in (0,1)$,
    \[
        \sum_{k \ge 1} m_{k+p}^{-\frac{1}{2k}}
        =
        \sum_{k \ge p+1} \Big( m_{k}^{-\frac{1}{2k}} \Big)^{1+\frac{p}{k-p}}
        \ge
        \sum_{k \ge p+1} \indic{\{m_k^{-\frac{1}{2k}} \ge c^k \}} 
        m_{k}^{-\frac{1}{2k}} c^{\frac{kp}{k-p}}.
    \]
    The latter sum is infinite by \eqref{eq:Carleman}, since
    $c^{\frac{kp}{k-p}} \to c^p$ as $k \to \infty$ and since those
    terms that have been removed from the sum in \eqref{eq:Carleman} form a
    convergent series. The usual method of moments for measures on $\R_+$
    \cite[Section~3.3.5]{Durrett_2019} shows that $(\M^\Phi_n)_{n \ge 1}$
    converges weakly to $\M^\Phi$. In other words, for any continuous
    bounded $f \colon \R_+ \to \R$,
    \[
        \M_n[ f(\abs{X}) \Phi(\mathcal{X}) ] 
        \goesto{n \to \infty}
        \M[ f(\abs{X}) \Phi(\mathcal{X}) ].
    \]
    Taking $f(x) = g(x) / x^p$ for some continuous bounded $g \colon \R_+
    \to \R$ whose support is bounded away from $0$ (in the usual sense),
    we obtain that 
    \[
        \M_n[ g(\abs{X}) \Phi(\hat{\mathcal{X}}) ] 
        \goesto{n \to \infty}
        \M[ g(\abs{X}) \Phi(\hat{\mathcal{X}}) ],
    \]
    where $\hat{\mathcal{X}} = (X, d, \hat{\mu})$ is the renormalized
    probability mmm-space, with $\hat{\mu} = \mu / \abs{X}$. Standard
    arguments pertaining to Portemanteau's theorem prove that, for any
    $\epsilon > 0$ such that $\M( \{ \abs{X} = \epsilon \} ) = 0$ and any
    continuous bounded $f \colon \R_+ \to \R$, 
    \[
        \M_n[ \indic_{\{ \abs{X} \ge \epsilon\}} f(\abs{X}) \Phi(\hat{\mathcal{X}}) ] 
        \goesto{n \to \infty}
        \M[ \indic_{\{ \abs{X} \ge \epsilon \}} f(\abs{X}) \Phi(\hat{\mathcal{X}}) ].
    \]
    This shows that the ``law'' of $(\abs{X}, \hat{\mathcal{X}})$ under 
    $\M_n^{(\epsilon)}$ converges to that of $(\abs{X},
    \hat{\mathcal{X}})$ under $\M^{(\epsilon)}$. Since the map
    $\mathcal{X} \mapsto (\abs{X}, \hat{\mathcal{X}})$ is a
    homeomorphism from $\{ \abs{X} \ge \epsilon \}$ to 
    $[\epsilon, \infty) \times \{ \abs{X} = 1 \}$, $(\M^{(\epsilon)}_n)_{n \ge 1}$ 
    converges to $\M^{(\epsilon)}$ weakly, which proves the vague
    convergence using Proposition~\ref{prop:vagueCharaterization}.
\end{proof}

\section{Perturbation theorem}
\label{sec:perturbation}

One drawback of the method of moments is that it requires all moments of
the mass of the random mmm-spaces to be finite. At first glance, it
cannot be used to prove convergence to a limit with infinite moments and
should lead to suboptimal moment assumptions at best. In practice, this
issue can be circumvented by carrying out an appropriate cutoff which
removes the parts of the mmm-spaces that make the moments blow up, see
\cite{boenkost2022genealogy, schertzer2023spectral, foutel2024semi} for
applications of this idea. The following perturbation result provides an
efficient way to deduce the convergence of a sequence $(\mathcal{X}_n)_{n
\ge 1}$ by comparing it to an auxiliary sequence $(\mathcal{Y}_{k,n})_{n
\ge 1}$, which gets close the original sequence uniformly in $n \ge 1$ as
$k \to \infty$.

\begin{theorem}[Approximation] \label{thm:approximation}
    Let $(\mathcal{Y}_{k,n})_{k,n \ge 1}$ and $(\mathcal{X}_n)_{n \ge 1}$
    be random mmm-spaces. Suppose that there exists $(c_n)_{n \ge 1}$ and
    $(\M_k)_{k \ge 1}$ such that 
    \begin{enumerate}[\rm (i)]
        \item vaguely in the Gromov-weak topology,
            \[
                \forall k \ge 1,\quad c_n \mathscr{L}(\mathcal{Y}_{k,n})
                \goesto{n \to \infty} \M_k;
            \]
        \item for all $\epsilon > 0$,
            \[
                \adjustlimits\lim_{k \to \infty} \limsup_{n \to \infty} 
                c_n \P\big( d_{\mathrm{GP}}( \mathcal{Y}_{k,n}, \mathcal{X}_n
                ) \ge \epsilon \big) = 0.
            \]
    \end{enumerate}
    Then, there exists $\M$ such that, vaguely in the Gromov-weak
    topology,
    \[
        \lim_{n \to \infty} c_n \mathscr{L}(\mathcal{X}_n) 
        = \lim_{k \to \infty} \M_k
        = \M.
    \]
\end{theorem}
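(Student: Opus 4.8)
The plan is to produce the limit $\M$ as a vague subsequential limit of $(c_n \mathscr{L}(\mathcal{X}_n))_{n \ge 1}$ and then identify it as the common limit of both sequences by testing against a separating family of functionals. Throughout I use the elementary bound $\abs{\abs{X} - \abs{X'}} \le d_{\GP}(\mathcal{X}, \mathcal{X}')$, which holds because the Prohorov distance controls the difference of total masses. The basic perturbation estimate is as follows: for a bounded $L$-Lipschitz $F \in \Cb^*$ with $\abs{F} \le M$ and $\supp F \subseteq \{ \abs{X} \ge \delta \}$, and any $\epsilon \in (0, \delta/2)$, I split $\E\abs{F(\mathcal{X}_n) - F(\mathcal{Y}_{k,n})}$ according to whether $d_{\GP}(\mathcal{X}_n, \mathcal{Y}_{k,n})$ exceeds $\epsilon$. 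On $\{ d_{\GP} \ge \epsilon \}$ the integrand is at most $2M$; on $\{ d_{\GP} < \epsilon \}$ it is at most $L\epsilon$ and, since $F$ vanishes off $\{\abs{X} \ge \delta\}$ and $\epsilon < \delta/2$, it can be nonzero only when $\abs{Y_{k,n}} > \delta/2$. Multiplying by $c_n$, taking $\limsup_n$, and using condition~(i) together with Portmanteau's theorem for vague convergence (Proposition~\ref{prop:vagueCharaterization}), with $r_k(\epsilon) := \limsup_n c_n \P(d_{\GP}(\mathcal{Y}_{k,n}, \mathcal{X}_n) \ge \epsilon)$, yields
\begin{equation} \tag{$\star$}
    \limsup_{n \to \infty} \abs{ c_n \E[F(\mathcal{X}_n)] - \M_k[F] } \le 2M\, r_k(\epsilon) + L \epsilon\, \M_k(\abs{X} \ge \delta/2).
\end{equation}

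Before exploiting $(\star)$ I record two uniform mass bounds obtained by the same transfer idea: comparing $\mathcal{X}_n$ and $\mathcal{Y}_{k,n}$ to a fixed reference $\mathcal{Y}_{k_0, n}$ and using $\abs{\abs X - \abs{X'}} \le d_{\GP}$ together with conditions~(i) and~(ii) gives $\sup_n c_n \P(\abs{X_n} \ge \eta) < \infty$ and $\sup_k \M_k(\abs{X} \ge \eta) < \infty$ for every $\eta > 0$ (here one uses that $r_k(\eta/4) \to 0$, so $\sup_k r_k(\eta/4) < \infty$). Writing $\bar B := \sup_k \M_k(\abs X \ge \delta/2) < \infty$, the estimate $(\star)$ drives a double-limit argument. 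For the family $(\M_k)$ I bound $\abs{\M_k[F] - \M_{k'}[F]}$ by the sum of the two right-hand sides of $(\star)$ for $k$ and $k'$; letting $k, k' \to \infty$ kills the $r$-terms by condition~(ii), and then letting $\epsilon \to 0$ (legitimate since what remains is at most $2 L \epsilon \bar B$) shows that $(\M_k[F])_k$ is Cauchy, hence converges to some $\Lambda(F)$. Feeding this back into $(\star)$ and letting first $k \to \infty$ then $\epsilon \to 0$ shows that $c_n \E[F(\mathcal{X}_n)] \to \Lambda(F)$ as well. Thus both sequences converge, against every bounded Lipschitz $F \in \Cb^*$, to the same functional $\Lambda$, which is visibly positive.

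It remains to realise $\Lambda$ as a locally finite measure $\M$, and this is the crux of the proof and the main obstacle I anticipate: since $\mathscr{X}$ is Polish but \emph{not} locally compact, one cannot invoke a Riesz representation for $\Lambda$, and genuine tightness is required. I establish vague relative compactness of $(c_n \mathscr{L}(\mathcal{X}_n))_{n \ge 1}$ by transferring tightness from the $\mathcal{Y}_{k,n}$. Fix $\delta > 0$; by Proposition~\ref{prop:vagueCharaterization} and condition~(i), for a continuity level $\delta' < \delta/2$ of $\M_k$ the restrictions $c_n \mathscr{L}(\mathcal{Y}_{k,n})^{(\delta')}$ converge weakly, hence are tight in $n$. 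Given $r, \eta > 0$, I choose $k$ via condition~(ii) so that $\limsup_n c_n \P(d_{\GP}(\mathcal{Y}_{k,n}, \mathcal{X}_n) \ge \min(r/2, \delta - \delta')) < \eta/2$, cover the mass of the $\mathcal{Y}_{k,n}$ up to $\eta/2$ by finitely many balls of radius $r/2$, and enlarge them to radius $r$: an outcome with $\abs{X_n} \ge \delta$ lying outside the enlarged balls forces either $d_{\GP}(\mathcal{X}_n, \mathcal{Y}_{k,n})$ to be large or $\mathcal{Y}_{k,n}$ to lie outside the original balls while keeping $\abs{Y_{k,n}} \ge \delta'$. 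Taking $\limsup_n$ bounds the excess mass by $\eta$, which is the ball-covering criterion for tightness; with the uniform mass bound this gives, through Prohorov's theorem, that $(c_n \mathscr{L}(\mathcal{X}_n)^{(\delta)})_n$ is relatively compact for every $\delta$, that is, the sequence is vaguely relatively compact.

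Finally I assemble the pieces. By relative compactness, some subsequence of $(c_n \mathscr{L}(\mathcal{X}_n))$ converges vaguely to a locally finite measure $\M$ on $\mathscr{X}^*$, and testing against bounded Lipschitz $F \in \Cb^*$ identifies $\M[F] = \Lambda(F)$. Since such functionals are a convergence-determining class for the vague topology (a standard fact in the framework of Section~\ref{sec:otherConv}), the convergences $c_n \E[F(\mathcal{X}_n)] \to \M[F]$ and $\M_k[F] \to \M[F]$ for all of them upgrade to full vague convergence, giving $c_n \mathscr{L}(\mathcal{X}_n) \to \M$ and $\M_k \to \M$, as required. The delicate points to watch are the consistent choice of continuity levels $\delta'$ of the countably many measures $\M_k$ when passing to weak convergence of restrictions, and the order of the iterated limits in $k$, $k'$ and $\epsilon$, which relies on the uniform bound $\bar B < \infty$.
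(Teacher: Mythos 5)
Your proposal is correct, but it takes a genuinely different route from the paper's proof. The paper argues at the level of measures, using the complete metric $D^*_{\Pr}$ of \eqref{eq:vagueProhorov}: it converts condition (ii) into Prohorov-distance bounds between the laws via Strassen's coupling characterization (giving \eqref{eq:boundProhorov}), proves the quantitative comparison Lemma~\ref{lem:boundPr} bounding $D^*_{\Pr}$ by $D_{\Pr}$ plus tail masses, shows that $(\M_k)_{k \ge 1}$ is Cauchy for $D^*_{\Pr}$, and obtains the limit $\M$ from \emph{completeness} of this metric --- no tightness and no subsequence extraction occur anywhere. You instead argue at the level of test functionals: your estimate $(\star)$, which bypasses Strassen by splitting $\E\abs{F(\mathcal{X}_n) - F(\mathcal{Y}_{k,n})}$ directly on the given coupling, shows that $(\M_k[F])_k$ and $(c_n \E[F(\mathcal{X}_n)])_n$ are Cauchy with common limit $\Lambda(F)$ for every bounded Lipschitz $F \in \Cb^*$, and you must then manufacture the measure $\M$ yourself. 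You correctly identify this as the crux in the absence of local compactness, and your tightness transfer --- restricting to $\{\abs{X} \ge \delta\}$, covering the mass of the $\mathcal{Y}_{k,n}$ by finitely many balls, enlarging, and invoking the ball-covering criterion plus Prohorov's theorem --- is sound, since $(\mathscr{X}, d_{\GP})$ is complete and separable; the identification of all limits through the convergence-determining class of Lipschitz functionals supported away from $\mathbf{0}$ is standard in the weak\textsuperscript{\#} framework of Section~\ref{sec:otherConv}. What each approach buys: the paper's proof is shorter but outsources the compactness to the nontrivial cited fact that $D^*_{\Pr}$ is a complete metric for the vague topology, and it yields the Cauchy property of $(\M_k)$ directly as measures; your proof re-derives the needed compactness by hand and is self-contained modulo standard Prohorov-type facts, at the price of the extra tightness-transfer step. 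Both proofs rest on the same two load-bearing ingredients: the transfer of condition (ii) into closeness of laws, and the uniform mass bounds, which appear in the paper as \eqref{eq:unifBoundMass} and in yours as $\bar B < \infty$.

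Two cosmetic slips, neither fatal. First, hypothesis (ii) only guarantees that $r_k(\eta/4)$ is finite for $k$ large, so $\sup_{k \ge 1} r_k(\eta/4) < \infty$ is not literally implied; this is harmless because your limits in $k, k' \to \infty$ only use uniformity over large $k$, and for the finitely many remaining indices $\M_k(\abs{X} \ge \eta) < \infty$ holds anyway by local finiteness of each $\M_k$. Second, your covering argument bounds a $\limsup_n$ while tightness requires a $\sup_n$; this is cured by adding finitely many balls for the initial indices, each finite measure $c_n \mathscr{L}(\mathcal{X}_n)^{(\delta)}$ on the Polish space $\mathscr{X}$ being tight on its own.
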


Results in the same spirit are already available for weak convergence of
random metric measure spaces -- see \cite[Lemma~2.9]{athreya2016gap} or
\cite[Lemma~3.4]{boenkost2022genealogy} -- or for more general state
spaces \cite[Theorem~4.28]{kallenberg2002foundations}. In addition to
extending these results to vague convergence, a contribution of our
theorem is that it does not require the limit to be known.

\begin{remark}[Approximation by a cutoff]
    In the applications that we have in mind, the approximating space
    $\mathcal{Y}_{k,n}$ is obtained by ``truncating'' $\mathcal{X}_n$,
    which leads to a simplification of the previous result.
    Let $X_{k,n}$ be a closed subset of $X_n$, and let $d_{k,n}$ and $\mu_{k,n}$
    denote the restrictions of $d_n$ and $\mu_n$ to $X_{k,n}$ respectively.
    We choose as approximation $\mathcal{Y}_{k,n} = (X_{k,n}, d_{k,n},
    \mu_{k,n})$. In this case, because of the trivial bound
    \[
        d_{\GP}(\mathcal{Y}_{k,n}, \mathcal{X}_n) \le \abs{X_n} -
        \abs{X_{k,n}}, 
    \]
    Point (ii) in the previous theorem can be replaced with
    \[
        \forall \epsilon > 0,\quad 
        \adjustlimits\lim_{k \to \infty} \limsup_{n \to \infty} 
        c_n \P\big( \abs{X_n} - \abs{X_{k,n}} \ge \epsilon \big) = 0.
    \]
\end{remark}

The proof of this theorem will require a first technical result. 
We can define a complete distance that induces the vague topology.
Let $D_\Pr$ denote the Prohorov distance on $(\mathscr{X}, d_{\GP})$. (We
use a capital letter to make a distinction with the Prohorov distance 
used in the construction of $d_{\GP}$ in \eqref{eq:gromovProhorov}.) We
can define a complete distance that metrizes the vague topology as 
\begin{equation} \label{eq:vagueProhorov}
    \forall \M, \M',\quad
    D^*_{\Pr}( \M, \M' ) = \int_0^\infty e^{-u} \big( 1 \wedge D_{\Pr}(
        \M^{(u)}, \M'{}^{(u)} ) \big) \diff u,
\end{equation}
which is a slight modification of the distance defined in
\cite[Equation (A2.6.1)]{daley2003introduction}.

\begin{lemma} \label{lem:boundPr}
    Let $\mathcal{X}$ and $\mathcal{X}'$ be two random mmm-spaces. 
    Fix $a > 0$ and $x, \epsilon > 0$ such that
    \[
        x > \epsilon > D_{\Pr}\big(a\mathscr{L}(\mathcal{X}),
        a\mathscr{L}(\mathcal{X}')\big).
    \]
    Then,
    \[
        D^*_{\Pr}\big(a\mathscr{L}(\mathcal{X}), a\mathscr{L}(\mathcal{X'})\big) 
        \le x + \epsilon \Big[ 1 + a\P(\abs{X} \ge x-\epsilon) +
            a\P(\abs{X'} \ge x-\epsilon) \Big].
    \]
\end{lemma}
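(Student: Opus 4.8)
The plan is to split the defining integral \eqref{eq:vagueProhorov} of $D^*_{\Pr}$ at the level $u=x$ and to treat the two ranges separately, invoking the hypothesis $D_{\Pr}\bigl(a\mathscr{L}(\mathcal{X}),a\mathscr{L}(\mathcal{X}')\bigr)<\epsilon$ only on the tail. Write $\M=a\mathscr{L}(\mathcal{X})$ and $\M'=a\mathscr{L}(\mathcal{X}')$, so both have total mass $a$ and $\M(F)\le\M'(F^\epsilon)+\epsilon$ for every closed $F\subseteq\mathscr{X}$, and symmetrically, where $F^\epsilon$ denotes the open $\epsilon$-neighbourhood for $d_{\GP}$. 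A preliminary observation I would record is that the total mass $\mathcal{X}\mapsto\abs{X}$ is $1$-Lipschitz on $(\mathscr{X},d_{\GP})$, i.e. $\bigl|\abs{X}-\abs{X'}\bigr|\le d_{\GP}(\mathcal{X},\mathcal{X}')$: any Prohorov distance dominates the difference of total masses, and the isometric embeddings in \eqref{eq:gromovProhorov} preserve the total mass. In particular each set $\{\abs{X}\ge u\}$ is closed and $\{\abs{X}\ge u\}^\epsilon\subseteq\{\abs{X}\ge u-\epsilon\}$.

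For the head range $u\in[0,x]$ I would simply bound $1\wedge D_{\Pr}(\M^{(u)},\M'{}^{(u)})\le 1$, so that this part contributes at most $\int_0^x e^{-u}\,\diff u=1-e^{-x}\le x$, which accounts for the leading term $x$.

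The heart of the argument is the tail range $u\ge x$, where I would prove the per-level estimate $D_{\Pr}(\M^{(u)},\M'{}^{(u)})\le \epsilon+\M(\{u-\epsilon\le\abs{X}<u\})+\M'(\{u-\epsilon\le\abs{X}<u\})$. To get it, fix a closed $F$ and apply the hypothesis to the closed set $A=F\cap\{\abs{X}\ge u\}$: since $A^\epsilon\subseteq F^\epsilon\cap\{\abs{X}\ge u-\epsilon\}$ by the Lipschitz property, and since $\{\abs{X}\ge u-\epsilon\}=\{\abs{X}\ge u\}\cup\{u-\epsilon\le\abs{X}<u\}$, one obtains $\M^{(u)}(F)=\M(A)\le\M'(A^\epsilon)+\epsilon\le\M'{}^{(u)}(F^\epsilon)+\epsilon+\M'(\{u-\epsilon\le\abs{X}<u\})$; the symmetric inequality follows by exchanging $\M$ and $\M'$. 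Because the resulting slack is at least $\epsilon$, these two inequalities yield the claimed bound on $D_{\Pr}(\M^{(u)},\M'{}^{(u)})$. Integrating against $e^{-u}$ over $[x,\infty)$, the constant $\epsilon$ contributes $\epsilon e^{-x}\le\epsilon$, while for the shell terms I would drop $e^{-u}\le 1$ and apply Fubini: for fixed $\mathcal{Y}$ the Lebesgue measure of $\{u\ge x:\ u-\epsilon\le\abs{Y}<u\}$ is at most $\epsilon\,\indic_{\{\abs{Y}\ge x-\epsilon\}}$, whence $\int_x^\infty\M(\{u-\epsilon\le\abs{X}<u\})\,\diff u\le\epsilon\,\M(\{\abs{X}\ge x-\epsilon\})=\epsilon\,a\P(\abs{X}\ge x-\epsilon)$, and likewise for $\M'$. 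Summing the head and tail contributions gives exactly the stated bound.

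I expect the main obstacle to be the per-level Prohorov estimate for the restrictions: restriction to $\{\abs{X}\ge u\}$ does not commute with taking $\epsilon$-neighbourhoods, so the cutoff mismatch produces the boundary ``shell'' mass that must be controlled, and because $\M^{(u)}$ and $\M'{}^{(u)}$ generally have different total masses both directions of the Prohorov inequality have to be verified, which is why both survival terms appear. The Lipschitz continuity of $\abs{\,\cdot\,}$ is the ingredient that confines this shell to $\{u-\epsilon\le\abs{X}<u\}$ and ultimately makes the Fubini bound collapse to a single factor of $\epsilon$.
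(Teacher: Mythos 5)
Your proof is correct and follows essentially the same route as the paper's: split the integral in \eqref{eq:vagueProhorov} at $u=x$, use the $1$-Lipschitz property of $\mathcal{X}\mapsto\abs{X}$ to derive the per-level bound $D_{\Pr}(\M^{(u)},\M'{}^{(u)})\le\epsilon+(\M+\M')(\{u-\epsilon\le\abs{X}<u\})$, and control the shell terms by Fubini. The only cosmetic difference is that you drop $e^{-u}\le 1$ before integrating the shell mass, whereas the paper keeps the factor $e^{-\abs{X}}\le 1$ one step longer; your explicit checks (closedness of $\{\abs{X}\ge u\}$, both Prohorov directions, the slack $\ge\epsilon$) are all sound.
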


\begin{proof} 
    In order to ease the notation, let us denote by $m \coloneqq
    a\mathscr{L}(\mathcal{X})$, $m' \coloneqq a\mathscr{L}(\mathcal{X}')$,
    and, for $r >0$, by $m_r$ and $m'_r$ the restrictions of these
    measures to $\{ \mathcal{X} : \abs{X} \ge r \}$. By the definition
    of the Prohorov distance, for any subset $C \subseteq \{ \mathcal{X}
    : \abs{X} \ge r \}$ which is closed in the Gromov-weak topology,
    \[
        m_r(C) \le m'(C^\epsilon) + \epsilon 
        \le m'_r(C^\epsilon) + \epsilon + 
        m'\big( C^{\epsilon} \cap \{ \abs{X} < r \} \big),
    \]
    where $C^\epsilon = \{ \mathcal{X} : d_{\GP}(\mathcal{X}, C) <
    \epsilon \}$. Since $d_{\GP}( \mathcal{X}, \mathcal{X'} ) \ge \abs[\big]{
    \abs{X} - \abs{Y} }$, we obtain that 
    \[
        m_r(C) \le m'_r(C^\epsilon) + \epsilon + 
        m'\big( \{r-\epsilon \le \abs{X} < r \} \big).
    \]
    By inverting the role of $m$ and $m'$ we obtain the bound
    \[
        D_{\Pr}(m_r, m'_r) \le \epsilon + (m+m')\big( \{r-\epsilon \le \abs{X} < r\} \big).
    \]
    Using Fubini's theorem, we can compute
    \begin{align*}
        \int_0^\infty e^{-r} \big(1 \wedge D_{\Pr}(m_r, m'_r) \big) \diff r
        &\le (1-e^{-x}) + \epsilon + \int_x^\infty \int_{\mathscr{X}} 
         e^{-r}\indic_{\{ r-\epsilon \le \abs{X} < r \}}
        (m+m')(\diff \mathcal{X}) \diff r \\
        &\le x + \epsilon + \int_{\mathscr{X}} \indic_{\{ \abs{X} \ge
        x-\epsilon \}} \int_{\abs{X}}^{{\abs{X}}+\epsilon} e^{-r} \diff r
        \, (m+m')(\diff \mathcal{X}) \\
        &\le x + \epsilon + \epsilon \int_{\mathscr{X}} \indic_{\{ \abs{X} \ge
        x-\epsilon \}} e^{-\abs{X}} \, (m+m')(\diff \mathcal{X}) \\
        &\le x + \epsilon \Big[1+ (m+m')\big( \abs{X} \ge x-\epsilon \big)
        \Big]. \qedhere
    \end{align*}
\end{proof}

\begin{proof}[Proof of Theorem~\ref{thm:approximation}]
    Our first task is to show that $(\M_k)_{k \ge 1}$ is a Cauchy
    sequence for the metric $D^*_{\Pr}$. Using point (ii) of our
    assumption, for any fixed $\epsilon > 0$, for $k$ and $n$ large
    enough and $k' \ge k$,
    \[
        c_n \P\big( d_{\mathrm{GP}}( \mathcal{Y}_{k,n}, \mathcal{X}_n) \ge \epsilon \big) 
        \le \epsilon,\qquad
        c_n \P\big( d_{\mathrm{GP}}( \mathcal{Y}_{k,n}, \mathcal{Y}_{k',n}) \ge \epsilon \big) 
        \le \epsilon.
    \]
    Using a well-known expression for the Prohorov distance in terms of
    coupling, see for instance \cite[Corollary~7.5.2]{rachev2013methods},
    this shows that 
    \begin{equation} \label{eq:boundProhorov}
        D_{\Pr}\big( c_n \mathscr{L}(\mathcal{Y}_{k,n}), c_n \mathscr{L}(\mathcal{X}_n) \big)
        \le  \epsilon, \qquad
        D_{\Pr}\big( c_n \mathscr{L}(\mathcal{Y}_{k,n}), c_n \mathscr{L}(\mathcal{Y}_{k',n}) \big)
        \le  \epsilon.
    \end{equation}
    Let us first show that this bound implies that, for any $x > 0$, 
    there is $C_x < \infty$ such that 
    \begin{equation} \label{eq:unifBoundMass}
        \sup_{n,k \ge 1} c_n \P( \abs{Y_{k,n}} \ge x ) \le C_x,\qquad
        \sup_{n \ge 1} c_n \P( \abs{X_n} \ge x ) \le C_x.
    \end{equation}
    Let $k, n$ be large enough so that \eqref{eq:boundProhorov} holds for
    $\epsilon = x/2$ for all $k' \ge k$. Then, by definition of the
    Prohorov distance,
    \begin{gather*}
        c_n \P(\abs{Y_{k',n}} \ge x) \le c_n
        \P(\abs{Y_{k,n}} \ge x/2) + x/2,\\
        c_n \P(\abs{X_n} \ge x) \le c_n
        \P(\abs{Y_{k,n}} \ge x/2) + x/2.
    \end{gather*}
    Since $(c_n\mathscr{L}(\mathcal{Y}_{k,n}))_{n \ge 1}$ converges
    vaguely to $\M_k$, the right-hand side of the previous inequalities
    are uniformly bounded for $n \ge 1$, showing \eqref{eq:unifBoundMass}.

    Let us now turn to the sequence $(\M_k)_{k \ge 1}$. Fix $\epsilon >
    0$ and $x > \epsilon$. Lemma~\ref{lem:boundPr} and
    \eqref{eq:boundProhorov} show that, for $k,n$ large enough and $k'
    \ge k$,
    \begin{align*}
        D_{\Pr}^*\big( c_n \mathscr{L}(\mathcal{Y}_{k,n}),
            c_n \mathscr{L}(\mathcal{Y}_{k',n}) \big)
        &\le x + \epsilon \big[ 1 + c_n \P( \abs{Y_{k,n}} \ge x-\epsilon )
                                     + c_n \P( \abs{Y_{k',n}} \ge x-\epsilon )
                             \big]\\
        &\le x + \epsilon [1 + 2C_{x-\epsilon}].
    \end{align*}
    Letting $n \to \infty$ and using point (i) of our assumption leads to 
    \[
        D_{\Pr}^*\big( \M_k, \M_{k'} \big)
        \le x + \epsilon [1 + 2C_{x-\epsilon}].
    \]
    Letting $\epsilon \to 0$ first, then $x \to 0$, we see that
    $(\M_k)_{k \ge 1}$ is Cauchy for the distance $D^*_{\Pr}$. Since this
    is a complete metric inducing the vague convergence, there exists
    $\M$ such that
    \begin{equation} \label{eq:convCauchy}
        \lim_{k \to \infty} \M_k = \M,
    \end{equation}
    vaguely in the Gromov-weak topology.

    We finally prove that $(c_n \mathscr{L}(\mathcal{X}_n))_{n \ge 1}$
    converges to $\M$. As above, applying Lemma~\ref{lem:boundPr} using
    the bound \eqref{eq:boundProhorov} first and then \eqref{eq:unifBoundMass}
    shows that, for $k,n$ large enough,
    \[
        D_{\Pr}^*\big( c_n \mathscr{L}(\mathcal{Y}_{k,n}),
            c_n \mathscr{L}(\mathcal{X}_n) \big)
        \le x + \epsilon [1 + 2C_{x-\epsilon}].
    \]
    Therefore,
    \[
        \lim_{k \to \infty} \limsup_{n \ge 1} 
        D_{\Pr}^*\big( c_n \mathscr{L}(\mathcal{Y}_{k,n}),
            c_n \mathscr{L}(\mathcal{X}_n) \big)
        = 0.
    \]
    A triangular inequality, this uniform control, and the convergence
    \eqref{eq:convCauchy} end the proof.
\end{proof}

\bibliographystyle{amsplain}
\bibliography{biblio_vague.bib}

\end{document}